\newtheorem{proposition}{Proposition}
\newtheorem{theorem}[proposition]{Theorem}
\newtheorem{definition}[proposition]{Definition}
\newtheorem{lemma}[proposition]{Lemma}
\newtheorem{corollary}[proposition]{Corollary}
\newtheorem{remark}[proposition]{Remark}
\title{Non-existence of Global Transverse Poincar\'{e} Sections}
\author{Roisin Braddell}\address{ Roisin Braddell,
Laboratory of Geometry and Dynamical Systems, Department of Mathematics, Universitat Polit\`{e}cnica de Catalunya and BGSMath, Barcelona  \it{e-mail: roisin.braddell@upc.edu }
}
\thanks{The author is supported by a partial predoctoral grant from UPC with the ICREA Academia project of Eva Miranda. The author is partially supported  by the grants reference number MTM2015-69135-P (MINECO/FEDER) and reference number 2017SGR932 (AGAUR). This material is based upon work supported by the National Science Foundation under Grant No. DMS-1440140 and was begun while the author was in residence at the Mathematical Sciences Research Institute in Berkeley, California, during the Fall 2018 semester.}
\date{April 2019}
\begin{document}
\maketitle
\begin{abstract}
We study global transverse Poincar\'{e} sections and give topological conditions for their existence, showing they never exist in many important cases. We prove that an energy hypersurface possessing global transverse Poincar\'{e} section is equivalent to the hypersuface having a cosymplectic structure. We give a family of Hamiltonian systems with global Poincar\'{e} sections of all possible topologies. Finally, we address the question of when a compact hypersurface of a symplectic manifold possesses an induced cosymplectic structure.
\end{abstract}

\section{Introduction}
There has been a recent resurgence of interest in \emph{global surfaces of section}, whose applications range from systolic inequalities in Riemannian geometry \cite{abbondandolo2017systolic} to the existence of periodic orbits in equations of celestial mechanics (see for instance \cite{conley1963some} and \cite{albers2012global} for two examples among many). The difficulty of finding closed global surfaces of section (sometimes called \emph{ideal sections} or alternatively \emph{global transverse Poincar\'{e} sections}) was already noted by Birkhoff \cite{birkhoff1917dynamical}. As a result, one is usually considers global sections with boundary (as in the case of \cite{conley1963some} or \cite{albers2012global}) or tries to study sections which are transverse but not complete in the traditional sense (as found in \cite{dullin1995complete}).\\
\indent Necessary topological conditions for the existence of ideal sections have been given in, for example, \cite{bolsinov1996topology} where it was proved that that the existence of such a section is impossible for compact energy surfaces of natural two degree of freedom Hamiltonian systems of most topologies. Other work includes \cite{dullin1995complete}, which proves that ideal sections do not exist for energy surfaces of time reversal symmetric Hamiltonians. A good general description of the conditions for global transverse  Poincar\'{e} sections, however, is still lacking. \\
\indent Here, we prove a compact energy surface in possessing such a section has a \emph{cosymplectic} structure. This immediately gives topological restrictions on energy surfaces possessing such sections, generalizing results found in \cite{bolsinov1996topology} and \cite{dullin1995complete} and providing a geometrical explanation for the difficulty of finding examples of this dynamical object. We will show that such sections never exist in many important cases. Some particular results include
\begin{theorem}\label{cotangent}
Let $(H,\omega,T^*M)$ be a Hamiltonian system on $T^*M$ equipped with the canonical symplectic form. Then no level energy surface of $H$ possesses a global transverse Poincar\'{e} section.\footnote{Note that such sections can (and do) still exist for magnetic symplectic forms on cotangent bundles.}
\end{theorem}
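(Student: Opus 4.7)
The plan is to argue by contradiction, leveraging the paper's equivalence between global transverse Poincar\'{e} sections and cosymplectic structures together with the exactness of the canonical Liouville form on $T^*M$.

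Suppose that some compact energy surface $\Sigma=H^{-1}(c)\subset T^*M$ does admit a global transverse Poincar\'{e} section, and write $2n=\dim T^*M$. By the equivalence result announced in the introduction, $\Sigma^{2n-1}$ inherits a cosymplectic structure $(\alpha,\beta)$: closed forms with $\alpha\wedge\beta^{n-1}$ a volume form. The natural construction of this pair takes $\beta$ to be the restriction $\omega|_\Sigma$, where $\omega$ is the ambient symplectic form; indeed $\omega|_\Sigma$ is closed, pulls back to the section's symplectic form on any transverse slice, and annihilates $X_H$ because $i_{X_H}\omega = dH$ vanishes on $\Sigma$.

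The key computation exploits that $\omega = -d\lambda$ is exact on $T^*M$, with $\lambda$ the Liouville 1-form. Setting $\tau := -\lambda|_\Sigma$, one has $\beta = d\tau$ on $\Sigma$, and using $d\beta=0$ in the first line and $d\alpha=0$ in the second,
\begin{align*}
\beta^{n-1} &= d\tau \wedge \beta^{n-2} \;=\; d(\tau \wedge \beta^{n-2}),\\
\alpha \wedge \beta^{n-1} &= \alpha \wedge d(\tau \wedge \beta^{n-2}) \;=\; -d\bigl(\alpha \wedge \tau \wedge \beta^{n-2}\bigr).
\end{align*}
Since $\Sigma$ is closed and oriented by $\alpha \wedge \beta^{n-1}$, Stokes' theorem forces $\int_\Sigma \alpha \wedge \beta^{n-1} = 0$, contradicting the fact that the integrand is nowhere-vanishing.

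The main obstacle lies in the identification made in the first paragraph: one must extract from the proof of the equivalence that $\beta$ can genuinely be taken to be $\omega|_\Sigma$, or at least that $[\beta^{n-1}] = 0 \in H^{2n-2}(\Sigma;\R)$ whenever $\omega$ is exact near $\Sigma$. Without this compatibility with the ambient form, the exactness of $\omega$ on $T^*M$ does not translate into the exactness of $\beta^{n-1}$ that drives the Stokes argument; happily, the construction sketched above makes this identification transparent. A minor caveat is the degenerate case $n=1$, where $\beta^{n-2}$ is undefined and the argument breaks down; indeed for $\dim M = 1$ any point of a periodic orbit is trivially a global section, so the theorem carries content only for $n \ge 2$ and one should read the hypothesis as implicitly requiring this.
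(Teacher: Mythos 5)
Your proof is correct (modulo the $n\ge 2$ caveat you flag yourself, which the paper also assumes implicitly through its ``$\dim\Sigma\geq 2$'' remark), but it follows a genuinely different route from the paper's. The paper deduces Theorem \ref{cotangent} in Section 2: Proposition \ref{symplecticsubmanifold} shows that the \emph{section} is a compact symplectic submanifold of $(M,\omega)$ (non-degeneracy of $i^*\omega$ is obtained by flowing the section and introducing an auxiliary Hamiltonian $G$ built from the flow coordinate), and then the classical Stokes obstruction applies: an exact symplectic form cannot restrict symplectically to a closed submanifold of dimension at least $2$, and the canonical form on $T^*M$ is exact. You instead go through the Section 3 result that the whole energy hypersurface is cosymplectic with $\beta=\omega|_Z$ --- the identification you single out as the ``main obstacle'' is indeed exactly what the mapping-torus construction there provides, and there is no circularity since that proposition is proved independently of Theorem \ref{cotangent} --- and then run Stokes on the $(2n-1)$-dimensional hypersurface itself, writing $\alpha\wedge\beta^{n-1}=-d\bigl(\alpha\wedge\tau\wedge\beta^{n-2}\bigr)$ and contradicting the volume-form property; the sign bookkeeping and the use of $d\alpha=0$, $d\beta=0$ are all in order. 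The trade-off: the paper's argument is more economical, needing only non-degeneracy of $\omega$ on the section and an integral of $\omega^{n-1}$ over the $(2n-2)$-dimensional section, with no closed transverse one-form $\alpha$ in sight; your argument needs the full cosymplectic/mapping-torus structure, but in exchange it proves the slightly stronger statement that no closed hypersurface of an exact symplectic manifold can carry a cosymplectic structure whose two-form is the restriction of the ambient $\omega$, which dovetails with the discussion of cosymplectic hypersurfaces in Section 5. One cosmetic point: you use $\Sigma$ for the energy hypersurface, whereas the paper reserves $\Sigma$ for the section and $Z$ for the hypersurface; aligning notation would avoid confusion.
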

Such a statement eliminates the possibility of finding an ideal section for many classical dynamical systems including most systems coming from celestial mechanics. Having ruled out the possibility of global transverse Poincar\'{e} sections in what is, arguably, one of the most important cases, we give some sundry results on obstructions to their existence in other cases and examine the question of when an arbitrary hypersurface in a symplectic manifold possesses a cosymplectic structure. First we will give some preliminaries:
\begin{definition}
Let $H$ be a Hamiltonian function on a symplectic manifold $(\omega,M^{2n})$. A global surface of section for the Hamiltonian flow on closed energy hypersurface $Z=H^{-1}(c)$ is a compact, embedded hypersurface $\Sigma$ such that
\begin{itemize}
\item
the the boundary $\partial \Sigma$ consists of periodic trajectories (or invariant sets, for higher dimensions).
\item  $\Sigma \setminus \partial\Sigma$ is transverse to the flow.
\item Every orbit of the flow intersects $\Sigma$ in forward and backward time.
\end{itemize}
\end{definition}
The dynamics is then essentially governed by a return map of the $2n-2$ dimensional submanifold $\Sigma$. Note, however, that the existence of the boundary $\partial \Sigma$ can complicate things. In the ideal case, the Hamiltonian flow is everywhere transverse to $\Sigma$ and $\Sigma$ is then closed embedded submanifold. $\Sigma$ is then called a \emph{global transverse Poincar\'{e} section}
\begin{definition}
A cosymplectic manifold $Z$ of dimension $2n-1$ is a manifold equipped with a closed one-form $\alpha$ and closed two-form $\beta$ such that $\alpha \wedge \beta^{n-1}$ is a volume form on $Z$. 
\end{definition}

As $\alpha$ is closed, $ker(\alpha)$ gives a integrable distribution on $Z$. The associated foliation is regular. Moreover each leaf of the foliation $\mathcal{L}$ has an induced symplectic form given by $\iota_\mathcal{L}^*\beta$, where $\iota_\mathcal{L}$ is the canonical inclusion function. A cosymplectic manifold is a regular Poisson manifold equipped with a transverse Poisson vector field. In the case that $Z$ is a compact manifold and $\mathcal{L}$ is a compact leaf, upon dragging the leaf by the Poisson vector field $v$, one can see that in this case a cosymplectic manifold is equivalent to the following construction:
\begin{definition}
Let $\mathcal{L}$ be a symplectic manifold and $f$ a symplectomorphism of $\mathcal{L}$. A symplectic mapping torus with holonomy $f$ and fibre $\mathcal{L}$ is the suspension manifold
$$M_f =\frac{(I \times \mathcal{L})}{(0,x)\sim (1,f(x))}$$
\end{definition}
The equivalence of cosymplectic manifolds possessing a compact symplectic leaf and mapping tori was first proved by Liebermann in \cite{libermann1959automorphismes}. An exposition on their Poisson nature can be found in \cite{guillemin2011codimension}.

\paragraph{\textbf{Acknowledgements:}} The author is grateful to Holgar Dullin for useful conversations during the preparation of this material. The author is grateful to Robert Cardona and Amadeu Delshams for helpful comments on a first draft of this manuscript.

\section{Non-existence of sections for exact symplectic submanifolds}

In this section we will prove that Hamiltonian systems $(H,\omega,M)$ on exact symplectic submanifolds never possess global transverse Poincar\'{e} sections. Theorem \ref{cotangent} is then, of course, an important corollary of this fact.

\begin{proposition}\label{symplecticsubmanifold}
Let $(H,\omega,M)$ be a Hamiltonian system, $H^{-1}(c)=Z$ a compact level energy surface and $\Sigma \hookrightarrow Z$ a global transverse Poincar\'{e} section. Then $\Sigma$ is a symplectic submanifold of $(\omega, M)$
\end{proposition}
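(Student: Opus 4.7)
The plan is to exploit the following two facts at each point $p\in\Sigma$: first, $\Sigma$ has the right dimension ($2n-2$) to be symplectic, so I only need to show that $\iota^*\omega$ is non-degenerate on $T_p\Sigma$, where $\iota:\Sigma\hookrightarrow M$ is the inclusion; second, the characteristic line bundle of the hypersurface $Z\subset M$ is spanned by $X_H$, which is precisely the direction transverse to $\Sigma$ inside $Z$.

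More concretely, I would first recall that since $Z=H^{-1}(c)$ is a smooth regular level set, the Hamiltonian vector field $X_H$ is tangent to $Z$, and from $\iota_{X_H}\omega=-dH$ one sees that at every $p\in Z$ the kernel of $\omega|_{T_pZ}$ is exactly the line $\R\, X_H(p)$. Next, transversality of the flow to $\Sigma$ gives the splitting
\[
T_pZ \;=\; T_p\Sigma \;\oplus\; \R\, X_H(p)
\]
at every $p\in\Sigma$.

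The core computation is then to take a vector $v\in T_p\Sigma$ in the kernel of $\iota^*\omega$, i.e.\ $\omega(v,w)=0$ for every $w\in T_p\Sigma$, and check that $v=0$. Because $v\in T_pZ$ one also has $\omega(v,X_H(p))=-dH(v)=0$, so by the splitting $\omega(v,\cdot)$ annihilates all of $T_pZ$; therefore $v$ lies in the characteristic line $\R\,X_H(p)$. But $v\in T_p\Sigma$ and $X_H(p)\notin T_p\Sigma$ by transversality, hence $v=0$.

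The statement is purely linear-algebraic once the right decomposition is written down, so I do not expect a real obstacle here; the only subtlety to flag is the implicit assumption that $c$ is a regular value of $H$ (so that $Z$ is a smooth hypersurface and $X_H$ does not vanish on it), which is already built into the definition of a global transverse Poincar\'{e} section since $\Sigma$ must be transverse to a nowhere-vanishing vector field. With this in place, the proposition then feeds into the section's main argument: if $\omega=d\lambda$ is exact on $M$, then $\iota^*\omega=d(\iota^*\lambda)$ would endow the closed $(2n-2)$-manifold $\Sigma$ with an exact symplectic form, contradicting Stokes' theorem applied to $(\iota^*\lambda)\wedge(\iota^*\omega)^{n-2}$.
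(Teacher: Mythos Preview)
Your argument is correct and reaches the same conclusion as the paper, but the route differs in one notable way. The paper constructs a tubular neighbourhood $U\cong\Sigma\times(-\varepsilon,\varepsilon)$ of $\Sigma$ in $Z$ via the flow of $X_H$, introduces an auxiliary function $G$ agreeing with the time coordinate $t$ on $U$, and then uses the decomposition $T_pM=T_p\Sigma\oplus\R X_H\oplus\R X_G$ together with $\omega(i_*v,X_G)=dG(i_*v)=0$ to conclude that $i_*v$ pairs trivially with all of $T_pM$, whence $v=0$ by non-degeneracy of $\omega$ on the ambient manifold. You instead stay inside $T_pZ$, invoke the standard fact that the characteristic line of the hypersurface is $\R X_H$, and finish by intersecting that line with $T_p\Sigma$. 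Your version is a little more economical---no tubular neighbourhood, no auxiliary Hamiltonian $G$---at the cost of quoting the characteristic-line description of $\ker(\omega|_{T_pZ})$; the paper's version is more self-contained but slightly longer. Either way the content is the same piece of linear algebra, and your closing remark about the exact case is exactly how the paper proceeds to the corollary.
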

\begin{proof}
Let $i:\Sigma\hookrightarrow M$ be the inclusion map. We need to show that the restriction of $\omega$ to $\Sigma$, $i^*\Sigma$, is non-degenerate. Consider the tubular neighbourhood of $\Sigma$ in $Z$ formed by flowing $\Sigma$ by the vector field $X_H$ for a small time interval $(-\varepsilon,\varepsilon)$. Denote the time-$t$ flow of $X_H$ by $\Phi^t_{X_H}$ As $X_H$ is transverse to $\Sigma$, we have that $U$ is diffeomorphic to $\Sigma\times(-\varepsilon,\varepsilon)$ with an explicit diffeomorphism given by 
\begin{align*}
\Psi:\Sigma\times(-\varepsilon,\varepsilon)&\rightarrow U\\
\Psi(p,t)&=\Phi^t_{X_H}(p)
\end{align*}
We equip $U$ with the coordinates $(p,t)$. Now, let $v\in T_p\Sigma$ be such $i^*\omega(v,w)=0$ for all $w\in T_p\Sigma$. We will prove $v=0$. We have $\iota_{X_H}\omega(i_*v)=dH(i_*v)=0$ as $v$ is tangent to the level sets of $H$. Similarly, let $G$ be a function which is agrees with $\tilde{G}(p,t)=t$ on $U$ and let $X_G$ be the corresponding Hamiltonian vector field with Hamiltonian $G$. Then we have that $\iota_{X_G}\omega(i_*v)=dG(i_*v)=0$ as $v$ is tangent to the level sets of $G$. As $T_p\Sigma\oplus X_H \oplus X_G$ spans $T_pM$, $i_*v=0$ by the non-degeneracy of $\omega$. As $i$ is injective, then, $v=0$.
\end{proof}
It is well-known that \emph{exact} symplectic manifolds do not possess compact symplectic submanifolds as a symplectic form on a compact submanifold $\Sigma$ cannot be closed if $\dim\Sigma\geq 2$. Whence we have 
\begin{corollary}
Let $(H,\omega,M)$ be a Hamiltonian system on an exact symplectic manifold. Let $H^{-1}(c)=Z$ be compact. Then $Z$ does not possess a global transverse Poincar\'{e} section.
\end{corollary}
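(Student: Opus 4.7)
The plan is to combine Proposition~\ref{symplecticsubmanifold} with the classical fact that an exact symplectic manifold admits no compact symplectic submanifolds of positive dimension. The argument should be very short, essentially a single cohomology computation.

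First, I would invoke Proposition~\ref{symplecticsubmanifold}: if $Z$ carried a global transverse Poincar\'{e} section $\Sigma$, then the composition $\Sigma \hookrightarrow Z \hookrightarrow M$ would exhibit $\Sigma$ as a compact symplectic submanifold of $(M,\omega)$ of (even) dimension $2n-2$.

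Second, writing $\omega = d\lambda$ and letting $i$ denote the inclusion $\Sigma\hookrightarrow M$, the pullback $i^*\omega$ is exact on $\Sigma$, hence so is $(i^*\omega)^{n-1} = d\bigl((i^*\lambda)\wedge(i^*\omega)^{n-2}\bigr)$. Provided $n\geq 2$, this top-degree form is a volume form on the closed manifold $\Sigma$, so its total integral is strictly positive; Stokes' theorem, however, forces that same integral to vanish. This contradiction completes the proof.

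The only step requiring a genuinely new idea is the symplecticity of $\Sigma$ itself, and this has already been handled by the preceding proposition. What remains is a textbook cohomological obstruction, so I do not anticipate any serious difficulty here; the statement is advertised as a corollary precisely because Proposition~\ref{symplecticsubmanifold} has absorbed all the dynamical content. The mild case $n=1$ (so $\dim M=2$ and $\dim\Sigma=0$) sits outside the scope of the cohomological argument, but in that setting $Z$ is one-dimensional and the notion of a symplectic section degenerates, so no real obstruction arises.
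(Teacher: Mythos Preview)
Your proof is correct and matches the paper's approach exactly: invoke Proposition~\ref{symplecticsubmanifold} to exhibit $\Sigma$ as a compact symplectic submanifold, then use the standard fact that exact symplectic manifolds admit no closed symplectic submanifolds of positive dimension. The paper simply asserts this well-known obstruction without writing out the Stokes computation you supply, and like you it tacitly sets aside the degenerate case $\dim\Sigma<2$.
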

Recalling that the canonical symplectic form on a cotangent bundle $T^*M$ is exact then, we have theorem \ref{cotangent}.

\section{Topological Conditions on General Symplectic Submanifolds}
The previous proposition rules out global Poincar\'{e} sections in the case that the symplectic manifold is exact. In the case that it is non-exact, the following proposition will result in some topological obstacles to the existence of global Poincar\'{e} sections for both the energy level set and the ambient symplectic manifold.

\begin{proposition}
Let $Z$ be a compact energy level surface of a Hamiltonian system $(H,\omega,M)$ which possesses a global complete Poincar\'{e} section $\Sigma$. Then $Z$ is a cosymplectic manifold.
\end{proposition}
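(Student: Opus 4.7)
The plan is to exhibit an explicit cosymplectic pair $(\alpha,\beta)$ on $Z$. The 2-form is essentially forced: take $\beta := \iota^*\omega$, where $\iota : Z \hookrightarrow M$ is the inclusion. It is automatically closed, and since $Z$ is a regular level set of $H$ we have $\iota_{X_H}\omega|_{TZ} = dH|_{TZ} = 0$, so $\ker\beta$ is one-dimensional and spanned by $X_H$. The real content of the proposition is the construction of a closed $1$-form $\alpha$ with $\alpha(X_H) \neq 0$.

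First I would realize $Z$ as a (variable-time) mapping torus over $\Sigma$. By Proposition \ref{symplecticsubmanifold}, $\iota_\Sigma^*\omega$ is symplectic on $\Sigma$, and the first-return map $f : \Sigma \to \Sigma$ is a symplectomorphism of this form: for $v \in T_x\Sigma$ one has $f_* v = (\Phi^{\tau(x)}_{X_H})_* v + d\tau(v)\,X_H(f(x))$, and the extra $X_H$-term contributes nothing to $\omega$ because $\iota_{X_H}\omega$ vanishes on $TZ$. Denote by $\hat\tau : Z \to \R_{>0}$ the $X_H$-invariant extension of the return time $\tau$, and set $Y := X_H/\hat\tau$. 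Then $Y$ has constant return time $1$ to $\Sigma$, and the translates $\Sigma_s := \Phi^s_Y(\Sigma)$ for $s \in \R/\Z$ assemble into a regular foliation of $Z$.

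Next I would define $\alpha$ by $\alpha(Y) \equiv 1$ and $\alpha|_{T\Sigma_s} \equiv 0$; this is well-defined across the identification $s = 0 \sim 1$ because $\Phi^1_Y|_\Sigma = f$ maps $\Sigma$ to itself. Closedness breaks into two verifications. For $V, W \in \ker\alpha$, integrability of the foliation gives $d\alpha(V,W) = -\alpha([V,W]) = 0$. For the remaining directions, $\alpha(Y) \equiv 1$ is constant and $Y$ preserves $\ker\alpha$ (since $\Phi^t_Y$ permutes the leaves $\Sigma_s$), so $\mathcal{L}_Y\alpha = 0$, and hence $\iota_Y d\alpha = \mathcal{L}_Y\alpha - d(\iota_Y\alpha) = 0$ by Cartan's formula. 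Together these cover all $2$-vectors, so $d\alpha = 0$.

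To finish, it suffices to show that each leaf $\Sigma_s$ is a symplectic submanifold of $M$, for then $\beta^{n-1}|_{T\Sigma_s}$ is a top-degree volume on the leaf while $\alpha(Y) = 1$ supplies the complementary direction, making $\alpha \wedge \beta^{n-1}$ a volume form on $Z$. The same $X_H$-cancellation used for $f$ shows that $\Phi^s_Y : \Sigma \to \Sigma_s$ is a symplectomorphism for every $s$, so $\iota^*_{\Sigma_s}\omega$ is non-degenerate. I expect the main difficulty to be the careful construction of $\alpha$ and its closedness check; the rest reduces to Proposition \ref{symplecticsubmanifold} plus bookkeeping around the reparametrization $Y = X_H/\hat\tau$.
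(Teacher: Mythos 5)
Your argument is correct and is essentially the paper's proof: both realize $Z$ as a mapping torus of the first-return map, take $\beta=\iota^*\omega$ (symplectic on the fibres by transversality of $X_H$, via Proposition \ref{symplecticsubmanifold}), and take $\alpha$ to be the closed one-form dual to the induced fibration over $\mathbb{S}^1$ --- you simply verify closedness of $\alpha$ by hand instead of citing it as the pullback of $d\theta$ under the projection to the base. One small slip: to obtain constant return time $1$ you must rescale by $Y:=\hat\tau\,X_H$ rather than $Y:=X_H/\hat\tau$ (the latter has return time $\hat\tau^{2}$); this is cosmetic and does not affect the rest of the argument.
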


\begin{proof}
As noted in \cite{bolsinov1996topology}, the existence of a global Poincar\'{e} section equips $Z$ with the structure of mapping torus with fibre $\Sigma$ as follows: Let $p$ be a point in $\Sigma$. Denote by $T(p)$ the first return time of $p$ to the section sigma. $T(p)$ is automatically smooth and finite on $Z$ due to our assumption that the section is global transverse. The Poincar\'{e} map is given by 
\begin{align*}
\phi:\Sigma&\rightarrow\Sigma\\
\phi(p)&=\phi_{X_H}^{T(p)}(p)
\end{align*}
The following is then a diffeomorphism from a mapping torus $M_{\Sigma,\phi}$ with fibre $\Sigma$ and holonomy $\phi$ to $Z$.
\begin{align*}
\Psi:\frac{\Sigma\times[0,1]}{(0,p)\sim(1,\phi(p))} &\rightarrow Z \\
\Psi(p,t)&=\Phi_{X_H}^{tT(p)}(p)
\end{align*}
Such a mapping torus is a $\mathbb{S}^1$-fibre bundle and so comes equipped with a closed canonical nowhere vanishing one form $\alpha$, restricting to zero on fibres, which can be realised as the pull back of the generator $d\theta\in H^1(\mathbb{S}^1)$ under the projection map $\pi:M_{\Sigma,\phi}\rightarrow \mathbb{S}^1$ projecting the mapping torus to its base. Moreover, as $X_H$ is transverse to each fibre the restriction of $\omega$ to fibres is symplectic. Then $(\omega|_Z)^{n-1}\wedge\alpha$ is nowhere vanishing on $Z$, where $\omega|_Z$ is the restriction of $\omega$ to $Z$. Whence, $Z$ is cosymplectic.
\end{proof}

\begin{remark}
The fact that the Poincar\'{e} map is symplectic is of course part of classical literature and the above theorem could be proven by a easy extension of the method found in e.g. \cite{mcduff2017introduction}. However the above proof illuminates the cosymplectic nature of the manifold and is more suited to the following discussions.
\end{remark}

The above theorem puts immediate obvious restrictions on the topology of an energy surface possessing an global transverse Poincar\'{e} section using a well known fact on cosymplectic manifolds.

\begin{corollary}
Let $(M,\omega,H)$ be a Hamiltonian system and $Z=H^{-1}(c)$ a closed level energy set possessing a global transverse Poincar\'{e} section. Then $H^i(Z)$ is non-trivial for all $0\leq i\leq 2n-1$
\end{corollary}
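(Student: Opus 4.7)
The plan is to unpack \textquotedblleft well-known fact on cosymplectic manifolds\textquotedblright\ into an explicit argument using the closed forms $\alpha$ and $\beta:=\omega|_Z$ produced by the previous proposition, and then exhibit, for each degree $i\in\{0,\dots,2n-1\}$, an explicit closed $i$-form on $Z$ whose cohomology class is nonzero.

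First I invoke the previous proposition: the hypotheses give $Z$ the structure of a cosymplectic manifold with closed $1$-form $\alpha$ (pulled back from $\mathbb{S}^1$ via the mapping torus projection) and closed $2$-form $\beta=\omega|_Z$, such that $\alpha\wedge\beta^{n-1}$ is a volume form on $Z$. Because $Z$ is a compact orientable manifold, this volume form represents a nonzero class in $H^{2n-1}(Z)$.

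Next, for each $i\in\{0,\dots,2n-1\}$ I produce a candidate class. For $i=2k$ with $0\leq k\leq n-1$, take the class $[\beta^k]$; for $i=2k+1$ with $0\leq k\leq n-1$, take the class $[\alpha\wedge\beta^k]$. Each of these forms is closed (wedges of closed forms), so they do represent cohomology classes. The key pairing identity is
\begin{equation*}
[\beta^k]\smile[\alpha\wedge\beta^{n-1-k}]=\pm[\alpha\wedge\beta^{n-1}]\neq 0,\qquad
[\alpha\wedge\beta^k]\smile[\beta^{n-1-k}]=\pm[\alpha\wedge\beta^{n-1}]\neq 0.
\end{equation*}
Hence each of the proposed classes has a cup-product partner landing on the nonzero top class, so each must itself be nonzero. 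The case $i=0$ is the constant function $1$, which is of course nontrivial on any nonempty manifold.

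There is no real obstacle here; the argument is just the standard fact that a closed form that pairs nontrivially (via wedge) with another closed form into a volume form cannot be exact, since if $\gamma=d\eta$ and $\gamma'$ is closed then $\gamma\wedge\gamma'=d(\eta\wedge\gamma')$ would give a trivial top class, contradicting the fact that the integral of a volume form over a compact orientable manifold is nonzero. Applying this observation to the pairs above produces the required nonzero class in every degree from $0$ to $2n-1$, completing the proof.
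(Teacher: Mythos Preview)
Your proof is correct and follows essentially the same approach as the paper's: exhibiting the closed forms $\beta^k$ and $\alpha\wedge\beta^k$ in each degree and using that they wedge to the nonzero top class $[\alpha\wedge\beta^{n-1}]$. You have simply made explicit the cup-product/pairing argument that the paper leaves implicit when it asserts these forms are ``nowhere vanishing.''
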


\begin{proof}
Let $\alpha$ and $\beta$ be the defining one and two forms of the symplectic foliation. Then $\beta^i$ and $\alpha\wedge\beta^i$ are nowhere vanishing for $0\leq i \leq n-1$.
\end{proof}

Interestingly, the existence of a global transverse Poincar\'{e} section depends only on the level energy set of the Hamiltonian and not on the Hamiltonian itself.

\begin{theorem}
Let $(M,H,\omega)$ be a Hamiltonian system and $Z=H^{-1}(c)$ a closed submanifold of $M$ which is cosymplectic. Then $Z$ possesses a global transverse Poincar\'{e} section for the flow of $H$. 
\end{theorem}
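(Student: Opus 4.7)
The plan is to reverse the construction of the preceding theorem: given the cosymplectic data $(\alpha,\beta)$ on $Z$ with the induced choice $\beta = \omega|_Z$, I would use the closed one-form $\alpha$ to produce a fibration $Z\to\mathbb{S}^1$ whose fibres serve as global transverse Poincar\'e sections for $X_H$.

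The first step is to verify that $X_H$ is compatible with the cosymplectic structure. Since $\iota_{X_H}\omega = -dH$ vanishes on $TZ$, the restriction of $X_H$ to $Z$ lies in $\ker(\omega|_Z) = \ker\beta$; because $\beta^{n-1}$ is nowhere zero, $\ker\beta$ is one-dimensional and $X_H$ spans it (nowhere vanishing, as $Z$ is a regular level set). Contracting the volume form,
\begin{equation*}
\iota_{X_H}(\alpha\wedge\beta^{n-1}) \;=\; \alpha(X_H)\,\beta^{n-1} - \alpha\wedge\iota_{X_H}\beta^{n-1} \;=\; \alpha(X_H)\,\beta^{n-1},
\end{equation*}
is nowhere zero, forcing $\alpha(X_H)$ to have constant sign on $Z$. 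Replacing $\alpha$ by $-\alpha$ if necessary, we may assume $\alpha(X_H)>0$, and by compactness $\alpha(X_H)\geq c_0>0$.

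The next step invokes Tischler's theorem: on a closed manifold, a nowhere vanishing closed one-form can be $C^\infty$-approximated by one with rational periods, hence realised as the pullback of $d\theta$ under a smooth submersion $\pi:Z\to \mathbb{S}^1$. A sufficiently $C^0$-close approximation $\alpha'$ still satisfies $\alpha'(X_H)>c_0/2>0$ everywhere. I would then take $\Sigma = \pi^{-1}(\mathrm{pt})$ for any regular value; this is a closed embedded hypersurface of $Z$, and transversality to $X_H$ is automatic from $\alpha'(X_H)>0$.

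Finally, for an orbit $\gamma(t) = \Phi_{X_H}^t(x)$ the $\mathbb{S}^1$-coordinate satisfies
\begin{equation*}
\tfrac{d}{dt}(\pi\circ\gamma)(t) \;=\; \alpha'(X_H)(\gamma(t)) \;\geq\; c_0/2,
\end{equation*}
so $\pi\circ\gamma$ is strictly increasing at uniformly bounded rate and wraps around $\mathbb{S}^1$ infinitely often in both directions; hence $\gamma$ meets $\Sigma$ in forward and backward time. The main obstacle is the Tischler step: the given $\alpha$ need not have rational periods and the leaves of $\ker\alpha$ may be dense in $Z$, so one cannot simply declare a leaf of the symplectic foliation to be the section. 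The $C^\infty$-density of closed one-forms with rational cohomology class, combined with the openness of the transversality condition on compact $Z$, is exactly what makes the argument go through.
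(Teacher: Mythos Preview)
Your argument is correct and follows the same overall strategy as the paper: establish that the closed one-form $\alpha$ of the cosymplectic structure satisfies $\alpha(X_H)\neq 0$, then apply Tischler's approximation to replace $\alpha$ by a nearby closed form with rational periods so that the resulting fibration over $\mathbb{S}^1$ furnishes a compact fibre serving as the section, with the openness of the condition $\alpha(X_H)\neq 0$ guaranteeing transversality survives the perturbation.

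The one genuine difference is in how transversality $\alpha(X_H)\neq 0$ is obtained. The paper invokes the later Proposition~\ref{symplecticvectorfield} to produce a symplectic vector field $X$ transverse to $Z$ with $\alpha=\iota_X\omega$, and then computes $\alpha(X_H)=-\omega(X_H,X)=dH(X)\neq 0$. You instead argue intrinsically: since $\iota_{X_H}\omega$ vanishes on $TZ$, the restriction $X_H|_Z$ lies in the one-dimensional kernel of $\beta=\omega|_Z$, and contracting $X_H$ into the volume form $\alpha\wedge\beta^{n-1}$ forces $\alpha(X_H)$ to be nowhere zero. Your route is more self-contained (no forward reference) and makes explicit that $X_H$ is the Reeb-type direction of the cosymplectic structure; the paper's route ties the result back to the characterisation of cosymplectic hypersurfaces via transverse symplectic vector fields. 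Your final monotonicity argument for the return property is also more detailed than the paper's, which simply appeals to compactness and transversality.
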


\begin{proof}
By proposition \ref{symplecticvectorfield}, as $Z$ is cosymplectic there exists a symplectic vector field $X$ transverse to $Z$ such that the defining one form of the symplectic foliation on $Z$ is given by $\alpha=\iota_X(\omega)$. It is clear that $dH(X)\neq 0$ and so $\alpha(X_H)=\iota_X\omega(X_H)=-\omega(X_H,X)=dH(X)\neq 0$. This implies that $X_H$ is everywhere transverse to the symplectic foliation. Note that a leaf of the foliation for a general $\alpha$ may not be compact. However, as in the proof of the Tischler theorem \cite{tischler1970fibering} we can approximate $\alpha$ up to arbitrary precision by the rational sum of pullbacks of generators of the circle. Explicitly, for every $\varepsilon$, there exists some $\alpha'$ so that
$$\|\alpha-\alpha'\| < \varepsilon$$
and $\alpha'$ is of the form
$$\alpha'=\frac{1}{d} \Sigma_i n_i f_i^*d\theta$$
For $n_i\in \mathbb{N}, f_i:M\rightarrow \mathbb{S}^1$ where $d\theta$ is the canonical generator of $H^1(\mathbb{S}^1)$. Furthermore, as proved in \cite{tischler1970fibering}, the foliation defined by such a one-form is a fibration, each fibre of which is necessarily compact as $Z$ is.
As $\alpha(X_H) \neq 0$ is an open condition, then, we can find a one-form $\alpha'$ such that $\alpha'(X_H)\neq 0$ and $\alpha'$ is of the above form. As $M$ is compact and $X_H$ is everywhere transverse the symplectic foliation, an orbit of $X_H$ will intersect a chosen leaf $\mathcal{L}$ of the foliation in forward and backward time. Whence, $\mathcal{L}$ is a global transverse Poincar\'{e} section for $X_H$.
\end{proof}

\section{Examples with energy surfaces of all possible topologies}

We will now show that there are no further technical obstructions to the existence of an ideal section, to wit, given any cosymplectic manifold $N$ we form a Hamiltonian system $(H, M, \omega)$ which has as level energy surface $N$ and an associated global Poincar\'{e} section.

\begin{proposition}
Let $N$ be a cosymplectic manifold. There exists symplectic manifold $(M,\omega)$ and a Hamiltonian $H\in \mathcal{C}^\infty(M)$ such that $H^{-1}(0)=N$ and $X$ possesses an ideal section whose associated cosymplectic symplectic structure is that of $N$.
\end{proposition}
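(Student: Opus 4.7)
The plan is to construct $(M,\omega)$ as an explicit symplectic thickening of $N$, take $H$ to be the transverse coordinate, and then invoke the preceding theorem to produce the section. Let $(\alpha,\beta)$ denote the given cosymplectic structure on $N$, with $\dim N = 2n-1$.

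First I would set $M = N \times (-\varepsilon,\varepsilon)$ with transverse coordinate $t$ and projection $\pi\colon M \to N$, and define
$$\omega = \pi^*\beta + dt \wedge \pi^*\alpha.$$
Closedness is immediate from $d\alpha = d\beta = 0$. For non-degeneracy, observe that $(\pi^*\beta)^n = 0$ (being a top-degree form pulled back from a manifold of dimension $2n-1$), so
$$\omega^n = n\, dt \wedge \pi^*(\alpha \wedge \beta^{n-1}),$$
which is a volume form on $M$ because $\alpha \wedge \beta^{n-1}$ is a volume form on $N$. Setting $H(p,t) = t$ gives $H^{-1}(0) = N \times \{0\} \cong N$.

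Next I would compute the Hamiltonian vector field. Writing $X_H = V + c\,\partial_t$ with $V$ tangent to the $N$-factor and solving $\iota_{X_H}\omega = dt$ reduces to the conditions $\iota_V \alpha = -1$ and $\iota_V \beta + c\,\alpha = 0$. Decomposing $V$ with respect to the Reeb vector field $R$ of the cosymplectic structure (the unique field on $N$ satisfying $\iota_R \alpha = 1$ and $\iota_R \beta = 0$) yields $c = 0$ and $V = -R$. Hence $X_H = -R$ is tangent to $N$ and everywhere transverse to the symplectic foliation $\ker \alpha$.

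Finally, I would verify that the induced cosymplectic structure on $N$ coming from the embedding $N \hookrightarrow (M,\omega)$ coincides with $(\alpha,\beta)$: clearly $i^*\omega = \beta$ since $dt$ vanishes on $N$, and the constant field $\partial_t$ is symplectic (as $\mathcal{L}_{\partial_t}\omega = d(\iota_{\partial_t}\omega) = d(\pi^*\alpha) = 0$) and transverse to $N$, with $\iota_{\partial_t}\omega$ restricting to $\alpha$. This is exactly the data invoked in the proof of the preceding theorem, so that theorem applies to $(M,\omega,H)$ and produces a global transverse Poincar\'e section on $N$ whose associated cosymplectic structure is the prescribed $(\alpha,\beta)$. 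The main, and rather mild, subtlety is choosing the form of $\omega$ so that $X_H$ turns out to be the Reeb field of $(\alpha,\beta)$; once this is arranged, every remaining step is a routine verification.
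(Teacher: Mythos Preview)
Your construction is essentially the paper's: form the product of $N$ with a one-dimensional factor, take $\omega=\pi^*\beta+ dt\wedge\pi^*\alpha$, and use the transverse coordinate as Hamiltonian. The only differences are cosmetic---the paper uses $N\times\mathbb{S}^1$ with $H=\sin\theta$ and directly exhibits a leaf of $\ker\alpha$ as the section, whereas you use an open interval, compute $X_H=-R$ explicitly, and appeal to the preceding theorem (which is in fact slightly more careful, since the leaves of $\ker\alpha$ need not be compact a priori).
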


\begin{proof}
Consider the manifold $M=N\times\mathbb{S}^1$. Let $\alpha$ and $\beta$ denote the defining one and two forms of the cosymplectic structure on $N$ and $d\theta$ the canonical generator of $H^1(\mathbb{S}^1)$. Denote by $\pi_N$ and $\pi_{\mathbb{S}^1}$ the projection to the $N$ and $\mathbb{S}^1$ factors of $M$. Then $M$ possesses a symplectic form given by 
\begin{equation}
 \omega=\pi_N^*\beta+\pi_N^*\alpha\wedge\pi_{\mathbb{S}^1}^*d\theta  
\end{equation}
Now let $H$ be function which is constant on the submanifold $N\times\{0\}$, for example $H=\sin{\theta}$. Then the Hamiltonian vector field associated to $H$ is given by $c\partial_t$. Let $\mathcal{L}$ be a leaf of the symplectic mapping torus. Then $X_H$ is everywhere transverse to $\mathcal{L}$ and so $\mathcal{L}$ is an ideal section associated to the flow of $X_H$.
\end{proof}

\section{Cosymplectic Hypersurfaces}
Motivated by the previous sections, we ask which hypersurfaces of a symplectic manifold possess a cosymplectic structure, and so a global transverse Poincar\'{e} section. Paralleling the contact case, then, a global Poincar\'{e} section or cosymplectic structure exists on a level energy surface if and only if the hypersurface possesses a certain transverse vector field - in the case of the contact forms the associated vector field is \emph{Liouville}, in the cosymplectic case the vector field is \emph{symplectic}.
\begin{proposition}\label{symplecticvectorfield}
Let $(M,\omega)$ be a symplectic manifold and $Z$ a codimension $1$ hypersurface. Then $Z$ has an induced cosymplectic structure if and only if $Z$ possesses a transverse vector field which is symplectic at $Z$.
\end{proposition}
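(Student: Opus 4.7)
The plan is to prove the equivalence by using the algebraic identity $\iota_X(\omega^n) = n\,\omega^{n-1} \wedge \iota_X \omega$, which after pullback to $Z$ will tie the cosymplectic volume form condition directly to transversality. Throughout, let $\iota: Z \hookrightarrow M$ denote the inclusion and set $\beta := \iota^* \omega$, which is automatically closed.

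For the \emph{if} direction, suppose $X$ is a symplectic vector field on a neighborhood of $Z$ that is transverse to $Z$. I would define $\alpha := \iota^*(\iota_X \omega)$ and check it is closed: since $\omega$ is closed, $\mathcal{L}_X \omega = d\, \iota_X \omega$, which vanishes by hypothesis, and pullback commutes with $d$. The remaining step is to verify $\alpha \wedge \beta^{n-1}$ is a volume form. Pulling back the identity above gives $n\,\alpha \wedge \beta^{n-1} = \iota^*(\iota_X \omega^n)$, and evaluating the right-hand side on any basis $v_1, \ldots, v_{2n-1}$ of $T_pZ$ yields $\omega^n(X_p, v_1, \ldots, v_{2n-1})$, which is nonzero exactly when $X_p \notin T_pZ$. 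Transversality thus supplies non-degeneracy for free.

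For the \emph{only if} direction, given a cosymplectic structure $(\alpha, \beta = \iota^* \omega)$ on $Z$, the plan is to extend $\alpha$ to a closed 1-form in a tubular neighborhood $U \cong Z \times (-\varepsilon, \varepsilon)$ by pulling back along the retraction $\pi: U \to Z$, setting $\tilde\alpha := \pi^*\alpha$. Non-degeneracy of $\omega$ then produces a unique vector field $X$ on $U$ with $\iota_X \omega = \tilde\alpha$, and $\mathcal{L}_X \omega = d\tilde\alpha = 0$ gives that $X$ is symplectic. The step I expect to be the main obstacle is verifying that $X$ is transverse to $Z$, and here I would exploit coisotropy: for any hypersurface, the symplectic orthogonal $(T_pZ)^\omega$ is one-dimensional and contained in $T_pZ$, and I would identify it with the characteristic kernel $\ker \beta_p$. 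Because $\beta^{n-1}$ descends to a volume form on the $(2n-2)$-dimensional quotient $T_pZ / \ker\beta_p$, the condition that $\alpha \wedge \beta^{n-1}$ be a volume form is equivalent to $\alpha$ being non-vanishing on this characteristic line. If $X_p$ were tangent to $Z$, then for any $v$ spanning $(T_pZ)^\omega = \ker \beta_p$ I would obtain $\alpha_p(v) = \omega_p(X_p, v) = 0$, contradicting the volume form condition; hence $X$ is transverse and the construction closes up.
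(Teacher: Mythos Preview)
Your argument is correct. The \emph{if} direction is essentially identical to the paper's: both compute $\alpha\wedge\beta^{n-1}$ by contracting $\omega^n$ with the transverse vector field, the only difference being that you state the identity $\iota_X(\omega^n)=n\,\omega^{n-1}\wedge\iota_X\omega$ explicitly rather than carrying out the contraction by hand.

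The \emph{only if} direction, however, follows a genuinely different route. The paper builds an auxiliary symplectic form $\tilde\omega=\beta+\alpha\wedge dt$ on a tubular neighbourhood, observes that it agrees with $\omega$ along $Z$, and then appeals (implicitly) to a Moser-type symplectic neighbourhood argument to identify $\tilde\omega$ with $\omega$; the normal vector field $Y$ is manifestly symplectic for $\tilde\omega$ since $\iota_Y\tilde\omega=\alpha$ is closed, and the symplectomorphism transports $Y$ to the desired vector field for $\omega$. Your approach bypasses the auxiliary form entirely: you extend $\alpha$ to a closed $1$-form $\tilde\alpha$ by pullback along the retraction, solve $\iota_X\omega=\tilde\alpha$ directly, and then establish transversality via the coisotropy of $Z$ and the observation that the cosymplectic volume condition forces $\alpha$ to be nonzero on the characteristic line $\ker\beta_p=(T_pZ)^\omega$. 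This is more elementary in that it avoids any Moser argument, and it makes the role of the characteristic distribution transparent; the paper's approach, on the other hand, packages the transversality of $Y$ for free (it is the normal direction by construction) at the cost of invoking the neighbourhood theorem.
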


\begin{proof}
Assume that $Z$ possesses a transverse symplectic vector field. We will prove the existence of the defining one and two forms of a cosymplectic manifold. Let $X$ be a symplectic vector field everywhere transverse to the hypersurface $Z$. Then the form $\alpha=\iota_X \omega$ is nowhere vanishing due to the non-degeneracy of $\omega$. Furthermore $\alpha$ is closed as 
\begin{align*}
 \d(\iota_X\omega)&=\mathcal{L}_X\omega-\iota(d\omega)\\
                  &=0 
\end{align*}
and so $\alpha$ is the defining one-form of the foliation. Now let $i$ be the inclusion function $i:Z\hookrightarrow M$. Then $\iota^*(\omega)$ is closed, as $\omega$ is. Furthermore, $\alpha\wedge i^*(\omega)^{n-1}$ is a volume form: let $v_1$ be the symplectic vector field transverse to $Z$ and let $(v_2,...v_{2n})$ be a family of vectors spanning $T_pZ$. Identifying the vectors $v_i$ and $i_*v_i$ we have
\begin{align*}
    \alpha\wedge i^*\omega^{n-1}&(v_2,...v_{2n})\\
    &=\iota_{v_1}\omega\wedge \omega^{n-1}(v_2,...,v_{2n})\\
    &=\omega^{n-1}(v_1,...,v_{2n})\\
    \end{align*}
which is nowhere zero as $\omega$ is non-degenerate. $i^*\omega$ is then the defining two-form of the foliation.

Now let $(M,\omega)$ be a symplectic manifold with a cosymplectic hypersurface $Z$. Consider the symplectic form 
\begin{equation}
    \tilde{\omega}=\beta+\alpha\wedge dt
\end{equation}
where $dt$ is a one-form such that $dt(Y)=1$ for $Y$ a section of the (trivial) normal bundle of $Z$. Then the form $\omega$ is a symplectic form which agrees with the form $\omega$ on $Z$ and so $\tilde{\omega}$ is symplectomorphic to $\omega$ in an open neighbourhood of $Z$. But then $L_Y(\tilde{\omega})=d(i_Y\omega)=d\alpha=0$ and so $Y$ is a symplectic vector field transverse to $Z$.
\end{proof}

Finally we will prove that, in certain cases, the existence of a Hamiltonian system possessing an global transverse Poincar\'{e} section gives topological requirements on the ambient symplectic manifold, in addition to those on the hypersurface itself. First we need a simple lemma:
\begin{lemma}
Let $X$ be a vector field which transverse to a hypersurface $N$ and symplectic at $N$. If $H^1(M)=0$ then there is a vector field $X_{ext}$ such that $X_{ext}$ is a global symplectic vector field transverse to $N$.
\end{lemma}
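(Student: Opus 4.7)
The plan is to exploit the hypothesis $H^1(M)=0$, which forces every closed 1-form on $M$ to be exact and hence every globally defined symplectic vector field to be Hamiltonian. Consequently, it suffices to construct a function $H\in C^\infty(M)$ whose Hamiltonian vector field $X_H:=\omega^\sharp(dH)$ is transverse to $N$. At each $p\in N$, transversality of $X_H$ to $N$ is equivalent to $dH$ not annihilating the characteristic line $\ell_p:=(T_pN)^\omega\subset T_pN$, a condition detected by $dH(\ell)\ne 0$ along $N$.

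To produce such an $H$, I would first analyze the 1-form $\alpha:=\iota_X\omega$, which is closed on the neighborhood $U$ of $N$ where $X$ is symplectic. The transversality of $X$ translates to $\alpha(\ell)\ne 0$ on $N$, since $\ell_p\subset T_pN$ and $\omega(X,\ell_p)=0$ would force $X\in(\ell_p)^\omega=T_pN$. Passing to a tubular neighborhood $U\cong N\times(-\epsilon,\epsilon)$ with collar coordinate $t$ and writing $\alpha=\beta+g\,dt$, the closedness of $\alpha$ together with integration in $t$ yields a decomposition $\alpha=dG+\pi^*\alpha_0$ on $U$, where $G\in C^\infty(U)$, $\pi:U\to N$ is the projection, and $\alpha_0:=i^*\alpha$ is a closed 1-form on $N$ that carries the cohomological content of $\alpha$.

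Next I would globalize by introducing a cut-off $\chi\in C^\infty(M)$ with $\chi\equiv 1$ on a smaller tubular neighborhood $U'$ of $N$ and $\chi\equiv 0$ outside $U$, and correct the $\pi^*\alpha_0$ term by a globally defined exact 1-form. Since $H^1(M)=0$, one looks for $F\in C^\infty(M)$ whose restriction to $N$ reproduces $\alpha_0$ up to an exact 1-form on $N$. Setting $H:=\chi G+F$, the vector field $X_H$ is globally Hamiltonian, hence symplectic, and on $U'$ its defining 1-form pairs with $\ell$ as $\alpha$ does, yielding $dH(\ell)\ne 0$ and so transversality to $N$.

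The main obstacle is the production of $F$: a priori, the class $[\alpha_0]\in H^1(N)$ may be nonzero, while the vanishing of $H^1(M)$ ensures that no closed 1-form on $M$ can restrict to a non-exact form on $N$. The hypothesis $H^1(M)=0$ is used precisely here, via the Mayer--Vietoris sequence for the cover $M=U\cup(M\setminus\overline{U'})$, to constrain the obstruction to lie in the image of $H^0(U\cap(M\setminus\overline{U'}))$, which can then be absorbed. Alternatively one modifies $X$ inside $U$ by a symplectic vector field whose associated closed 1-form has class $-[\alpha_0]$ and whose pointwise contribution to $\alpha(\ell)$ is controlled; since transversality is an open condition along the compact set $N$, a sufficiently careful choice of modification preserves $\alpha(\ell)\ne 0$. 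This cohomological balancing is the technical heart of the proof.
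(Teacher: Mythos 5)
Your strategy is at bottom the same as the paper's: use $H^1(M)=0$ to reduce the problem to producing a globally defined function whose Hamiltonian vector field is transverse to $N$, starting from the closed form $\alpha=\iota_X\omega$ near $N$. However, the step you yourself defer to the end as ``the technical heart'' is a genuine gap, and neither mechanism you offer closes it. Since every closed one-form on $M$ is exact, the one-form $d(\chi G)+dF$ produced by your construction restricts to an \emph{exact} one-form on $N$; so if $[\alpha_0]=[i^*\alpha]\neq 0$ in $H^1(N)$, no choice of $F$ can reproduce $\alpha_0$ up to an exact form on $N$ --- the observation you invoke (``no closed $1$-form on $M$ can restrict to a non-exact form on $N$'') is precisely the obstruction, not a tool for removing it. Mayer--Vietoris does not rescue the argument: with $H^1(M)=0$ the relevant portion of the sequence only gives injectivity of $H^1(U)\oplus H^1(M\setminus\overline{U'})\rightarrow H^1(U\cap(M\setminus\overline{U'}))$, and the connecting map out of $H^0$ lands in $H^1(M)=0$; nothing in the sequence controls $H^1(N)\cong H^1(U)$, which is not constrained by the vanishing of $H^1(M)$.

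Worse, in the situation of the lemma the obstruction is never absent, so no refinement of this step can succeed. For $u\in\ell_p\subset T_pN$ one has $dH_p(u)=d(H|_N)_p(u)$, and since $N$ is closed the function $H|_N$ has a critical point; there $dH(\ell_p)=0$, so a Hamiltonian vector field can never be transverse to a closed hypersurface along all of $N$. Equivalently, $i^*\alpha$, which by Proposition \ref{symplecticvectorfield} is the one-form of a cosymplectic structure on $N$, can never be exact, i.e.\ $[\alpha_0]\neq 0$ always. Your alternative route --- modifying $X$ by a symplectic field of class $-[\alpha_0]$ --- is not carried out and runs into the same wall: after the modification the restricted class vanishes, and the critical-point argument forces transversality to fail somewhere on $N$. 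For comparison, the paper's own proof is a one-line version of your first idea (extend $\alpha$ to $M$ and take a primitive) and tacitly assumes the extension can be chosen closed, which requires exactly $[i^*\alpha]=0$; so you have correctly isolated the crux of the matter, but the sketch you give does not get past it, and along these lines it cannot.
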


\begin{proof}
Recall that as $M$ is simply connected all symplectic vector fields are Hamiltonian. Let $\alpha$ be the one form $\iota_X\omega$. As $N$ is closed $\alpha$ admits a global extension to $M$. Let $H$ be a primitive of $\alpha$. Then the Hamiltonian vector field associated to $H$ is a symplectic vector field whose restriction to $N$ is $X$.
\end{proof}

The idea for the following proposition was helpfully given by Klaus Niderkr\"{u}ger in \cite{326581}.

\begin{proposition}
Let $(M,\omega)$ be a compact simply connected symplectic manifold and $Z$ a connected hypersurface. Then there there exists no symplectic vector field transverse to $Z$.
\end{proposition}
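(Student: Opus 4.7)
My strategy is proof by contradiction: assume a symplectic vector field $X$ transverse to $Z$ exists, and derive a contradiction from the fact that the associated Hamiltonian flow preserves the Liouville volume, exploiting the separation of $M$ by $Z$.

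First I would invoke the preceding lemma to replace $X$ by a global symplectic vector field on $M$ transverse to $Z$; since $M$ is simply connected, this field is in fact Hamiltonian, so we may write it as $X_H$ for some $H\in\mathcal{C}^\infty(M)$. The simply connected hypothesis also ensures $H_1(M;\mathbb{Z}/2)=0$, so a standard mod-$2$ transverse intersection argument forces the closed connected hypersurface $Z$ to separate $M$: $M\setminus Z = M_+\sqcup M_-$ with $M_+$ and $M_-$ connected and open. After fixing an orientation, I may assume $X_H$ points from $M_-$ into $M_+$ at every point of $Z$.

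The geometric heart of the argument is to show, for any fixed $t>0$, that $\phi_t(\overline{M_+}) \subset M_+$ \emph{strictly}, where $\phi_t$ denotes the time-$t$ Hamiltonian flow. Any forward trajectory starting in $\overline{M_+}$ that attempted to leave would, by continuity of the flow together with the separation of $M$ by $Z$, have to first reach $Z$; but at such a crossing point the velocity $X_H$ would point into $M_+$, contradicting the exit. Applying the same reasoning to points of $Z$ itself shows $\phi_t(Z) \subset M_+$ for every $t>0$, so $\phi_t(\overline{M_+})$ is a compact subset of the open set $M_+$.

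The contradiction then comes by comparing volumes. Because $X_H$ is Hamiltonian, its flow preserves $\omega^n$, so
\[ \mathrm{vol}(\phi_t(\overline{M_+})) = \mathrm{vol}(\overline{M_+}) = \mathrm{vol}(M_+). \]
The open complement $M_+ \setminus \phi_t(\overline{M_+})$ therefore has zero volume and must be empty, forcing a compact set to equal the open nonempty set $M_+$, which is impossible since $Z\neq\emptyset$. I expect the trapping step to be the main obstacle: making rigorous, from only continuity of the flow and the sign of transversality, that $\overline{M_+}$ is genuinely confined strictly inside $M_+$ requires a careful intermediate value argument. Once this is in place the volume comparison is immediate, and the simply connected hypothesis is used precisely where one would expect -- in extending $X$ to a Hamiltonian vector field and in ensuring $Z$ separates $M$.
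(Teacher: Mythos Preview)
Your argument is correct and follows essentially the same route as the paper: use simple connectivity to make $Z$ separate $M$, then derive a contradiction from volume preservation under the flow of the transverse symplectic vector field. Your write-up is in fact more careful than the paper's, explicitly invoking the preceding extension lemma to obtain a globally defined (hence Hamiltonian, hence complete) vector field and spelling out the trapping step $\phi_t(\overline{M_+})\subset M_+$; the paper simply asserts that the flow carries one component to a set of the same volume properly containing it.
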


\begin{proof}
As $M$ is symplectic it has an associated volume form given by $\omega^n$ and any symplectic vector field will preserve the symplectic volume of each component. As $M$ is simply connected, any closed hypersurface $Z$ separates $M$ into two components $G_0$ and $G_1$. But then the (either forward or backward) flow of the symplectic vector field transverse to such a $Z$ will map $G_0$ into a subset of $M$, properly containing $G_0$, of the same volume, which is a contradiction.
\end{proof}

\begin{corollary}
Let $(M,\omega)$ be a compact simply connected symplectic manifold and $H$ a Hamiltonian function on $M$. Then no level energy set of $H$ possesses a global transverse Poincar\'{e} section.
\end{corollary}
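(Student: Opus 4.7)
The plan is to derive the corollary by contradiction, chaining together the three ingredients established in this section: the cosymplectic characterization of energy surfaces with global transverse Poincar\'{e} sections proved in Section~3, Proposition~\ref{symplecticvectorfield}, the extension lemma just above, and the preceding proposition.

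Suppose for contradiction that some level set $Z=H^{-1}(c)$ admits a global transverse Poincar\'{e} section $\Sigma$. Since $M$ is compact and $H$ is smooth, $Z$ is a compact hypersurface. Each connected component of $Z$ is an invariant closed level set for $X_H$, and the intersection of $\Sigma$ with any such component is itself a global transverse section for the restricted flow; so restricting attention to one component, I may assume $Z$ is connected. By the cosymplectic proposition of Section~3, $Z$ inherits a cosymplectic structure from the section. Then Proposition~\ref{symplecticvectorfield} supplies a vector field transverse to $Z$ that is symplectic on a neighbourhood of $Z$.

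Next I use the simple connectivity hypothesis. Simple connectivity of $M$ gives $H_1(M;\Z)=0$ by Hurewicz and hence $H^1(M;\R)=0$ by the universal coefficient theorem, which is precisely the cohomological hypothesis of the preceding extension lemma. Applying that lemma upgrades the vector field of the previous paragraph to a globally defined symplectic vector field on $M$ which remains transverse to the compact connected hypersurface $Z$. But the preceding proposition forbids exactly this configuration: on a compact simply connected symplectic manifold, no connected hypersurface admits a transverse symplectic vector field, since the flow would otherwise push one component of $M \setminus Z$ strictly into itself while preserving the symplectic volume $\omega^n$. Contradiction.

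I do not expect any substantive obstacle; the content is already packaged in the three results being cited, and this corollary is essentially a bookkeeping chain. The one step worth a moment's care is the reduction to a connected component of $Z$ at the outset, so that the connectivity hypothesis of the preceding proposition is actually available when invoked at the end.
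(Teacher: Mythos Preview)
Your proof is correct and follows exactly the chain the paper sets up (cosymplectic structure from Section~3, Proposition~\ref{symplecticvectorfield}, the extension lemma, and the preceding proposition); the paper provides no explicit proof of the corollary, so yours is the intended argument made explicit. Your reduction to a connected component of $Z$ is a necessary detail the paper leaves implicit but which is indeed required to match the connectivity hypothesis of the preceding proposition.
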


\bibliographystyle{plain}
\bibliography{references}
\end{document}